\def\EquationsBySection{\def\theequation
	{\thesection.\arabic{equation}}
	\@addtoreset{equation}{section}}
\def \d{{\mathrm{d}}}
\newtheorem{remark}{Remark}[section]
\newtheorem{definition}[remark]{Definition}
\newtheorem{theorem}[remark]{Theorem}
\newtheorem{lemma}[remark]{Lemma}
\newtheorem{example}[remark]{Example}
\newcommand\old[1]{}
\begin{document}
	\date{}
	\title{The Onsager-Machlup action functional for Mckean-Vlasov SDEs\thanks{The work is partially supported  by the NSFC (Nos. 12171084, 12071071) and the Fundamental Research Funds for the Central Universities.}}
	\author{Shanqi Liu\footnote{School of Mathematical Science, Nanjing Normal University, Nanjing 210023, China} \thanks{E-mail: shanqiliumath@126.com} Hongjun Gao\footnote{School of mathematics, Southeast University, Nanjing, China, Nanjing 211189, China} \thanks{The correspondent author. E-mail: gaohj@hotmail.com}
   and Huijie Qiao\footnote{School of mathematics, Southeast University, Nanjing, China, Nanjing 211189, China}  \thanks{E-mail: hjqiaogean@seu.edu.cn}
		\\
	}
	\maketitle
	\noindent {\bf \small Abstract}{\small
		\quad This paper is devoted  to deriving the Onsager-Machlup  action functional for Mckean-Vlasov stochastic differential equations in a class of norms that dominate $L^2([0,1], \mathbb{R}^d)$, such as supremum norm $\|\cdot\|_{\infty}$, H$\mathrm{\ddot{o}}$lder norms $\|\cdot\|_{\alpha}$ with $\alpha<\frac{1}{4}$ and $L^p$-norms with $p>4$ are included.  Moreover, the corresponding Euler-Lagrange equation for Onsager-Machlup action functional is derived and a example is given.

		\noindent\textbf{Key Words:} Onsager-Machlup action functional; Mckean-Vlasov SDEs; Girsanov transformation.
		
		\vskip 0.2cm
		
		\noindent {\sl\textbf{ AMS Subject Classification}} \textbf{(2020):} 37A50; 37H10; 82H35. \\

		\section{Introduction}
			The dynamic of a stochastic dynamic system, especially the dynamics between the metastable states of the system is an important topic in the field of stochastic dynamic system. However, since the trajectories of the stochastic system are unpredictable, we need to find some deterministic and significant quantities that carry dynamical information or that can be used to quantify the dynamic behavior of stochastic differential equations (SDEs in short).  For example, mean exit time and escape probability, etc. One can refer to \cite{JD1, MI} for more details. For the Onsager-Machlup (OM) action functional, we are concerned about, it is also a deterministic quantity that characterizes the most probable transition path between the metastable states of the system by variational principle for the OM action function.
		
		Onsager-Machlup action functional was originally initiated by Onsager and Machlup \cite{OM1, OM2} as the probability density functional for diffusion processes with linear drift and constant diffusion coefficients, and then Tizsa and Manning \cite{LT} generalized the results of Onsager and Machlup to nonlinear equations. In 1957 the rigorous mathematical approach was proposed by Stratonovich \cite{RL} and further improved by Ikeda and Watanabe \cite{NI}, Takahashi and Watanabe \cite{YT}, Fujita and Kotani \cite{TF}. The key point was to express the transition probability of a diffusion process employing a functional integral over paths of the process, and the integrand was called the OM action function.
		
		Onsager-Machlup action functionals for classical stochastic differential equations driven by Brownian motions have been extensively investigated in past decades \cite{MC1,MC2,DB,TF,KH,KH1,SM,NI,RL,LS,YT}.  Ikeda and Watanabe \cite{NI} derived the OM action functional for reference path $\phi\in C^2([0,1],\mathbb{R}^{d})$ under the supremum norm $\|\cdot\|_{\infty}$. D\"{u}rr and Bach \cite{DB} obtained the same results based on the Girsanov transformation of the quasi-translation invariant measure and the potential function (path integral representation). Shepp and Zeitouni \cite{LS} proved that the result still holds for every norm equivalent to the supremum norm and $\phi-x$ in the Cameron-Martin space. Capitaine \cite{MC1}  extended this result to a large class of natural norms on the Wiener space including the H\"{o}lder norm, the Sobolev norm and the Besov norm. In \cite{KH}, Hara and Takahashi provided a computation of the Onsager-Machlup action functional of an elliptic diffusion process for the supremum norm and this result was extended in \cite{MC2} by Capitaine to norms that dominate supremum norm. In particular, the norms $\|\cdot\|$ could be any norm dominating $L^2$-norm on $\mathbb{R}^d$.

		The work on SDEs with coefficients that depend on the law of the solution was initiated by
		McKean \cite{MC}. And it has developed quickly in the past twenty
		years. Compared with classical SDEs, McKean-Vlasov SDEs (also called distribution dependent SDEs or mean-field SDEs) can describe the macro environment and micro-actions of the models. Up to now, the theory of  McKean-Vlasov SDEs have been extensively studied by a large number of researchers. For example, Wang \cite{WF1} proved the strong
		well-posedness and some functional inequalities for McKean-Vlasov SDEs. Recently, Ren and Wang \cite{PR} studied the path-independence of additive functionals for
		McKean-Vlasov SDEs, and then Qiao and Wu \cite{QH} extended it to McKean-Vlasov SDEs with jumps. Moreover, in \cite{PR} they also derived the It\^o formula for distribution dependent functions involving the $L$-derivative introduced by  P.L. Lions \cite{PC} under some conditions. This formula is important in this paper.
		
		In this paper, we are interested in deriving the OM action functional for Mckean-Vlasov SDEs. The difficulty in proving the main result (Theorem 3.1) is that we can not apply usual Taylor expansion technique to a stochastic integral as that 
		in \cite{NI}. In order to effectively overcome this difficulty, we use the It\^o formula for distribution dependent functions to deal with the stochastic integral. This technique perfectly avoids the dilemma after using the classical Taylor expansion. 
		And then we obtain the desired result by a lot of estimation.  Finally, we also give the corresponding Euler-Lagrange equation for the Onsager-Machlup action functional and apply it to a simple example.
		
	The remainder of this paper is organized as follows.  In Section $2$, we recall some basic notations, assumptions and lemmas. And then our main result is stated and proved in Section $3$. In Section 4, we derive the corresponding 
	Euler-Lagrange equation for Onsager-Machlup action functional and apply it to an example.
		
		\section{Framework}
	In this section, we recall some basic notations, assumptions, lemmas that will be used later.
	       \subsection{Notations}
	       Let $\mathscr{P}$ be the space of all probability measures $\mu$ in $\mathbb{R}^{d}$, and let
	$$\mathscr{P}_{2}(\mathbb{R}^{d})=\left\{\mu \in \mathscr{P}\left(\mathbb{R}^{d}\right): \mu\left(|\cdot|^{2}\right):=\int_{\mathbb{R}^{d}}|x|^{2} \mu(\mathrm{d} x)<\infty\right\}.$$
	It is well known that $\mathscr{P}_{2}$ is a Polish space under the Wasserstein distance
	$$
	\mathbb{W}_{2}(\mu, \nu):=\inf _{\pi \in \mathscr{C}(\mu, \nu)}\left(\int_{\mathbb{R}^{d} \times \mathbb{R}^{d}}|x-y|^{2} \pi(\mathrm{d} x, \mathrm{d} y)\right)^{\frac{1}{2}}, \mu, \nu \in \mathscr{P}_{2}\left(\mathbb{R}^{d}\right),
	$$
	where $\mathscr{C}(\mu, \nu)$ is the set of couplings for $\mu$ and $\nu$; that is, $\pi \in \mathscr{C}(\mu, \nu)$ is a probability measure on $\mathbb{R}^{d} \times \mathbb{R}^{d}$ such that $\pi\left(\cdot \times \mathbb{R}^{d}\right)=\mu$ and $\pi\left(\mathbb{R}^{d} \times \cdot\right)=\nu$.
	\begin{remark}
	(i)	For any $\mathbb{R}^d$-valued random variable $X$ and $Y$, it holds that
		\begin{align}
			\mathbb{W}_{2}(\mathscr{L}_{X}, \mathscr{L}_{Y})\leq [E|X-Y|^2]^{\frac{1}{2}}.
		\end{align}
	(ii) If $\phi_t$ is a                                                                                                                                                                                                                                                                                                                                                                                                                                                                                                                                                                                                                                                                                                                                                                                                                                                                                                                                                                                                                                                                                                                                                                                                                                                                                                                                                                                                                                                                                                                                                                                                                                                                                                                                                                                                                                                                                                                                                                                                                                                                                                                                                                                                                                                                                                                                                                                                                                                                                                                                                                                                                                                                                                                                                                                                                                                                                                                                                                                                                                                                                                                                                                                                                                                                                                                                                                                                                                                                                                                                                                                                                                                                                                                                                                                                                                                                                                                                                                                                                                                                                                                                                                                                                                                                                                                                                                                                                                                                                                                                                                                                                                                                                                                                                                                                                                                                                                                                                                                                                                                                                                                                                                                                                                                                                                                                                                                                                                                                                                                                                                                                                                                                                                                                                                                                                                                                                                                                                                                                                                                                                                                                                                                                                                                                                                                                                                                                                                                                                                                                                                                                                                                                                                                                                                                                                                                                                                                                                                                                                                                                                                                                                                                                                                                                                                                                                                                                                                                                                                                                                                                                                                                                                                                                                                                                                                                                                                                                                                                                                                                                                                                                                                                                                                                                                                                                                                                                                                                                                                                                                                                                                                                                                                                                                                                                                                                                                                                                                                                                                                                                                                                                                                                                                                                                                                                                                                                                                                                                                                                                                                                                                                                                                                                                                                                                                                                                                                                                                                                                                                                                                                                                                                                                                                                                                                                                                                                                                                                                                                                                                                                                                                                                                                                                                                                                                                                                                                                                                                                                                                                                                                                                                                                                                                                                                                                                                                                                                                                                                                                                                                                                                                                                                                                                                                                                                                                                                                                                                                                                                                                                                                                                                                                                                                                                                                                                                                                                                                                                                                                                                                                                                                                                                                                                                                                                                                                                                                                                                                             deterministic path, then the law of $\phi_t$ is a Delta measure at $\phi_t$, i.e. $\mathscr{L}_{\phi_t}=\delta_{\phi_t}$.
	\end{remark}
	\begin{definition} \cite{PR}
		 Let $T \in(0, \infty]$, and set $[0, T]=[0, \infty)$ when $T=\infty$.
	
		(i) A function $h: \mathscr{P}_{2}\left(\mathbb{R}^{d}\right) \rightarrow \mathbb{R}$ is called $L$-differentiable at $\mu \in \mathscr{P}_{2}\left(\mathbb{R}^{d}\right)$, if the functional
		$$
		L^{2}\left(\mathbb{R}^{d} \rightarrow \mathbb{R}^{d}, \mu\right) \ni \phi \mapsto h\left(\mu \circ(\operatorname{Id}+\phi)^{-1}\right)
		$$
		is Fréchet differentiable at $\phi=0 \in L^{2}\left(\mathbb{R}^{d} \rightarrow \mathbb{R}^{d}, \mu\right)$; that is, there exists (hence, unique) $\xi \in L^{2}\left(\mathbb{R}^{d} \rightarrow \mathbb{R}^{d}, \mu\right)$ such that
		$$
		\lim _{\mu\left(|\phi|^{2}\right) \rightarrow 0} \frac{h\left(\mu \circ(\mathrm{Id}+\phi)^{-1}\right)-h(\mu)-\mu(\langle\xi, \phi\rangle)}{\sqrt{\mu\left(|\phi|^{2}\right)}}=0 .
		$$
		In this case, we denote $\partial_{\mu} h(\mu)=\xi$ and call it the $L$-derivative of $h$ at $\mu$.
		
		(ii) A function $h: \mathscr{P}_{2}\left(\mathbb{R}^{d}\right) \rightarrow \mathbb{R}$ is called $L$-differentiable on $\mathscr{P}_{2}\left(\mathbb{R}^{d}\right)$ if the $L$-derivative $\partial_{\mu} h(\mu)$ exists for all $\mu \in \mathscr{P}_{2}\left(\mathbb{R}^{d}\right)$. If moreover $\left(\partial_{\mu} h(\mu)\right)(y)$ has a version differentiable in $y \in \mathbb{R}^{d}$ such that $\left(\partial_{\mu} h(\mu)\right)(y)$ and $\partial_{y}\left(\partial_{\mu} h(\mu)\right)(y)$ are jointly continuous in $(\mu, y) \in$ $\mathscr{P}_{2}\left(\mathbb{R}^{d}\right) \times \mathbb{R}^{d}$, we denote $h \in C^{(1,1)}\left(\mathscr{P}_{2}\left(\mathbb{R}^{d}\right)\right)$.
		
		(iii) A function $h:[0, T] \times \mathbb{R}^{d} \times \mathscr{P}_{2}\left(\mathbb{R}^{d}\right) \rightarrow \mathbb{R}$ is said to be in the class $C^{1,2,(1,1)}([0, T] \times$ $\mathbb{R}^{d} \times \mathscr{P}_{2}\left(\mathbb{R}^{d}\right)$ ), if the derivatives
		$$
		\partial_{t} h(t, x, \mu), \partial_{x} h(t, x, \mu), \partial_{x}^{2} h(t, x, \mu), \partial_{\mu} h(t, x, \mu)(y), \partial_{y} \partial_{\mu} h(t, x, \mu)(y)
		$$
		exist and are jointly continuous in the corresponding arguments $(t, x, \mu)$ or $(t, x, \mu, y)$. If $f \in C^{1,2,(1,1)}\left([0, T] \times \mathbb{R}^{d} \times \mathscr{P}_{2}\left(\mathbb{R}^{d}\right)\right)$ with all these derivatives bounded on $[0, T] \times$ $\mathbb{R}^{d} \times \mathscr{P}_{2}\left(\mathbb{R}^{d}\right)$, we denote $f \in C_{b}^{1,2,(1,1)}\left([0, T] \times \mathbb{R}^{d} \times \mathscr{P}_{2}\left(\mathbb{R}^{d}\right)\right)$.
	
		(iv) Finally, we write $h \in \mathscr{C}\left([0, \infty) \times \mathbb{R}^{d} \times \mathscr{P}_{2}\left(\mathbb{R}^{d}\right)\right)$, if $h \in C^{1,2,(1,1)}\left([0, T] \times \mathbb{R}^{d} \times \mathscr{P}_{2}\left(\mathbb{R}^{d}\right)\right)$ and the function
		$$
		(t, x, \mu) \mapsto \int_{\mathbb{R}^{d}}\left\{\left\|\partial_{y} \partial_{\mu} h\right\|+\left\|\partial_{\mu} h\right\|^{2}\right\}(t, x, \mu)(y) \mu(\mathrm{d} y)
		$$
		is locally bounded, i.e. it is bounded on compact subsets of $[0, T] \times \mathbb{R}^{d} \times \mathscr{P}_{2}\left(\mathbb{R}^{d}\right)$.
	\end{definition}

		\subsection{Mckean-Vlasov SDEs}
		In this paper, we are concerned with the following Mckean-Vlasov SDE in $\mathbb{R}^d$:
		\begin{align}
			\d X_t=f(t,X_t,\mathscr{L}_{X_{t}})\d t+g(t,X_t,\mathscr{L}_{X_{t}})\d B_t, X_0=x,
		\end{align}
	where $f: [0,1]\times \mathbb{R}^{d} \times \mathscr{P}_{2}\left(\mathbb{R}^{d}\right) \rightarrow \mathbb{R}^{d}, g:  [0,1]\times\mathbb{R}^{d} \times \mathscr{P}_{2}\left(\mathbb{R}^{d}\right) \rightarrow \mathbb{R}^{d \times d}, B_{t}$ is a $d$-dimensional Brownian motion and $\mathscr{L}_{X_{t}}$ is the law of $X_{t}$ under the given complete filtration probability
	space $(\Omega,\mathcal{F},(\mathcal{F}_t)_{t\ge0},\mathbb{P})$.
	
Next, we impose some assumptions throughout this paper. And the norm $\|\cdot\|$ stand for a class of norms that dominate $L^2([0,1],\mathbb{R}^d)$ and satisfy the following assumptions $\mathbf{(H2)}, \mathbf{(H3)}$.

$\bullet$ $\mathbf{(H1)}$
There exists an increasing function $K:[0,\infty)\mapsto (0,\infty)$ such that

(i) $$|f(t,x,\mu)-f(t,y,\nu)|+\|g(t,x,\mu)-g(t,y,\nu))\|_{HS}\leq K(t)\Big(|x-y|+\mathbb{W}_{2}(\mu,\nu)\Big),$$
for $t\in[0,1]$, $x,y\in\mathbb{R}^d,\nu,\mu\in \mathscr{P}_{2}\left(\mathbb{R}^{d}\right)$.

(ii)
$$\|g(t,0,\delta_0))\|_{HS}+|f(t,0,\delta_0))|\leq K(t),$$
where $t\in [0,1], 0\in \mathbb{R}^d$ and $\delta_0$ is the Delta measure at 0.

(iii) there exists a constant $C>0$ such that
$$|\partial_{x}f(t,x,\mu)-\partial_{x}f(t,y,\nu)|\leq C\Big(|x-y|+\mathbb{W}_{2}(\mu,\nu)\Big),$$
for $t\in[0,1]$, $x,y \in\mathbb{R}^d,\nu,\mu\in \mathscr{P}_{2}\left(\mathbb{R}^{d}\right)$.

$\bullet$ $\mathbf{(H2)}$ (i) The norm $\|\cdot\|$ is invariant under the action of the orthogonal group $\mathcal{O}(\mathbb{R}^d)$ on the coordinates of the Brownian motion, i.e.
$$
\left\|\left(B^1, \ldots, B^i, \ldots, B^d\right)\right\|=\left\|\left(B^1, \ldots,-B^i, \ldots, B^d\right)\right\|.
$$

(ii) For every $1\leq i\leq d$ and every $c\in \mathbb{R}$,
\begin{align}
	\limsup _{\varepsilon \rightarrow 0} E[\exp(c|B^i_1|^2)\vert \|B\|\leq \varepsilon]\leqslant 1.\nonumber
\end{align}

$\bullet$  $\mathbf{(H3)}$ There exists $0<q<p$ such that for any $\varepsilon$ small enough

(i) Under the condition $\|B\|\leq\varepsilon$, then
$$\int_{0}^{1}B^4_t\d t\leq C_1\varepsilon^p,$$
and 

(ii)
$$P(\|B\|\leq\varepsilon)\geq\exp(\frac{C_2}{\varepsilon^q}),$$
where $C_1, C_2$ are positive constants.
\begin{remark}
	 Here we explain assumptions $\mathbf{(H1)}$-$\mathbf{(H3)}$.
	 
	 $\mathbf{(a)}$ As is well known, by $\mathbf{(H1)}$ $(i), (ii)$, there exists a unique solution $(X_t)$ to $(2.1)$. And
	 the imposed condition $(iii)$ in $\mathbf{(H1)}$ is  to extract divergence part of OM  action functional. 
	 
	 $\mathbf{(b)}$  $\mathbf{(H2)},\mathbf{(H3)}$ are the usual assumptions, one can found similar assumptions in \cite{XB2,MC1,SM, LS1}.
	 More specifically, $\mathbf{(H2)}$ (i) is to satisfy the conditions of Lemma $2.7$. And $\mathbf{(H2)}$ (ii) is similar to  $(P2)$ in \cite{MC1}. And note that if $\|\cdot\|$ dominates the supremum norm, $\mathbf{(H2)}$ (ii) is immediate. 
	 
	 $\mathbf{(c)}$ In the classic case, in order to deal with remainder terms after using Taylar expansion, $\mathbf{(H3)}$ is the critical assumption of small ball probability. Although we choose to apply It$\mathrm{\hat{o}}$ formula's for distribution dependent function rather classical Taylor expansion to deal with stochastic integral, this assumption is still indispensable in our proof. Moreover, this assumption covers many class of norms, such as supremum norm $\|\cdot\|_{\infty}$, H$\mathrm{\ddot{o}}$lder norms $\|\cdot\|_{\alpha}$ with $\alpha<\frac{1}{4}$ and $L^p$-norms with $p>4$. One can refer Lifshits \cite{MA} for more detailed about estimation of small ball probability under different norms.
\end{remark}

\subsection{Onsager-Machlup action functional}
We now introduce the definition of Onsager-Machlup action functional for Mckean-Vlasov SDEs.
		\begin{definition}
			Let $\varepsilon>0$ be given. Consider a tube surrounding the reference path $\phi_t$, if for $\varepsilon$ sufficiently small we estimate the probability of the solution process $X_t$ of Mckean-Vlasov SDE $(2.1)$
			lying in this tube in the form:
			$$\mathbb{P}(\{\|X-\phi\|\le\varepsilon\})\propto C(\varepsilon)\exp\{\int_{0}^{1}\mathnormal{OM}(t, \phi_t, \dot{\phi}_t, \mathscr{L}_{\phi_t})\mathrm{d}t\},$$
			then integrand $\mathnormal{OM}(t,\phi_t,\dot{\phi}_t, \mathscr{L}_{\phi_t})$ is called $\text{\bf{Onsager-Machlup action function}}$ for Mckean-Vlasov SDEs. Where $\propto$ denotes the equivalence relation for $\varepsilon$ small enough and $\|\cdot\|$ is a suitable norm. We also call $\int_{0}^{1}\mathnormal{OM}(t, \phi_t, \dot{\phi}_t, \mathscr{L}_{\phi_t})\mathrm{d}t$ the $\text{\bf{Onsager-Machlup action functional}}$ for Mckean-Vlasov SDEs.
		\end{definition}
	In our paper, we always let the norm $\|\cdot\|$ stands for a class of norms that dominate $L^2([0,1],\mathbb{R}^d)$ and satisfy assumptions above. In addition, throughout the paper we assume reference path $\phi_t$ with initial value $\phi_0=x$ and $\phi_{t}-x$ belongs to Cameron-Martin space $\mathcal{H}$.  Cameron-Martin space $\mathcal{H}$ stands for  the class  of all absolutely continuous functions $h$
	such that $h(0) = 0$ and $\dot{h}\in L^2([0,1],\mathbb{R}^d)$; the inner product is given by
	the formula
	$$(h_1, h_2)_{\mathcal{H}}:=\int_{0}^{1}\dot{h}_1(t)\dot{h}_2(t)\d t.$$
		\subsection{Technical lemmas}
		\begin{lemma}[\cite{NI} \text{pp 536-537}]\label{lem1}
			For a fixed $n\geqslant1$, let $I_1, \ldots, I_n $ be $n$ random variables defined on $(\Omega,\mathcal{F},\mathbb{P})$ and $\{A_\varepsilon;\varepsilon >0\}$ a family of sets in $\mathcal{F}$. Suppose that for any $c\in \mathbb{R}$ and any $i=1,\dots, n$, if we have
			\begin{align}
				\limsup _{\varepsilon \rightarrow 0} E[\exp(cI_i)\vert A_\varepsilon]\leqslant 1.\nonumber
			\end{align}
			Then
			\begin{align}
				\lim_{\varepsilon\to 0} E \left [\exp \left(\sum_{i=1}^{n}cI_i \right ) \bigg| A_\varepsilon \right ]= 1.\nonumber
			\end{align}
		\end{lemma}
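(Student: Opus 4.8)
The plan is to sandwich the quantity $E\bigl[\exp\bigl(\sum_{i=1}^{n}cI_i\bigr)\mid A_\varepsilon\bigr]$ between a $\liminf$ that is at least $1$ and a $\limsup$ that is at most $1$, using convexity of the exponential (Jensen's inequality) for the lower bound and the generalized H\"older inequality for the upper bound. Throughout, $E[\,\cdot\mid A_\varepsilon]$ denotes expectation with respect to the conditional probability measure $\mathbb{P}(\,\cdot\mid A_\varepsilon)=\mathbb{P}(\,\cdot\cap A_\varepsilon)/\mathbb{P}(A_\varepsilon)$; we may assume $\mathbb{P}(A_\varepsilon)>0$ for all sufficiently small $\varepsilon$, otherwise the statement is vacuous, and then Jensen's and H\"older's inequalities apply verbatim under this probability measure.

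For the lower bound, I would first note that the hypothesis, applied to a single index $i$ with an arbitrary real constant, forces $E[I_i\mid A_\varepsilon]\to 0$ as $\varepsilon\to 0$. Indeed, Jensen's inequality gives $\exp\bigl(cE[I_i\mid A_\varepsilon]\bigr)\le E[\exp(cI_i)\mid A_\varepsilon]$, so $\limsup_{\varepsilon\to 0}\exp\bigl(cE[I_i\mid A_\varepsilon]\bigr)\le 1$, and since $\exp$ is continuous and strictly increasing this yields $\limsup_{\varepsilon\to 0}cE[I_i\mid A_\varepsilon]\le 0$ for every $c\in\mathbb{R}$; choosing $c=1$ and $c=-1$ gives $\lim_{\varepsilon\to 0}E[I_i\mid A_\varepsilon]=0$. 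Applying Jensen's inequality once more, now to $\sum_{i=1}^{n}cI_i$ under $\mathbb{P}(\,\cdot\mid A_\varepsilon)$,
$$
E\Big[\exp\Big(\sum_{i=1}^{n}cI_i\Big)\,\Big|\,A_\varepsilon\Big]\ \ge\ \exp\Big(\sum_{i=1}^{n}cE[I_i\mid A_\varepsilon]\Big)\ \longrightarrow\ 1 \quad (\varepsilon\to 0),
$$
whence $\liminf_{\varepsilon\to 0}E\bigl[\exp\bigl(\sum_{i=1}^{n}cI_i\bigr)\mid A_\varepsilon\bigr]\ge 1$.

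For the upper bound, I would invoke the generalized H\"older inequality with the $n$ equal conjugate exponents $p_1=\dots=p_n=n$ (so that $\sum_{i=1}^n p_i^{-1}=1$), again under $\mathbb{P}(\,\cdot\mid A_\varepsilon)$:
$$
E\Big[\exp\Big(\sum_{i=1}^{n}cI_i\Big)\,\Big|\,A_\varepsilon\Big]=E\Big[\prod_{i=1}^{n}\exp(cI_i)\,\Big|\,A_\varepsilon\Big]\ \le\ \prod_{i=1}^{n}\Big(E\big[\exp(ncI_i)\mid A_\varepsilon\big]\Big)^{1/n}.
$$
Since the hypothesis is assumed for \emph{every} real constant, it applies with $nc$ in place of $c$, so $\limsup_{\varepsilon\to 0}E[\exp(ncI_i)\mid A_\varepsilon]\le 1$ for each $i$. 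Using that the $\limsup$ of a product of nonnegative bounded sequences does not exceed the product of the individual $\limsup$'s, and that $x\mapsto x^{1/n}$ is continuous and increasing on $[0,\infty)$, I conclude $\limsup_{\varepsilon\to 0}E\bigl[\exp\bigl(\sum_{i=1}^{n}cI_i\bigr)\mid A_\varepsilon\bigr]\le\prod_{i=1}^{n}1=1$. Together with the lower bound this gives the asserted limit.

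The whole argument is elementary; the step I would flag as carrying the proof is the upper bound, where one must rewrite the exponential of the sum as a product and then pick the \emph{equal} H\"older exponents $n$, so that after the harmless rescaling $c\mapsto nc$ each factor is precisely one of the quantities controlled by the hypothesis. Everything else reduces to Jensen's inequality and the elementary behaviour of $\limsup$ under continuous monotone maps and under products of nonnegative sequences.
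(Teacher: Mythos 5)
Your argument is correct, and it is essentially the standard proof of this lemma (the paper itself gives no proof, citing Ikeda--Watanabe, pp.\ 536--537). The upper bound is exactly the classical one: write the exponential of the sum as a product and apply the generalized H\"older inequality with equal exponents $n$, then use the hypothesis with $nc$ in place of $c$. Where you deviate slightly is the lower bound: the reference obtains it by applying H\"older once more to $1=E\bigl[e^{\frac{1}{p}\sum cI_i}\,e^{-\frac{1}{p}\sum cI_i}\mid A_\varepsilon\bigr]$, which bounds $1$ by a power of the target expectation times a factor already controlled by the upper-bound step with $-c$; you instead show via Jensen that $E[I_i\mid A_\varepsilon]\to 0$ and then apply Jensen to the sum. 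Both routes are elementary and work; your Jensen route just needs the (automatic) remark that for small $\varepsilon$ each $I_i$ is conditionally integrable, since $e^{|I_i|}\le e^{I_i}+e^{-I_i}$ has eventually bounded conditional expectation by the hypothesis with $c=\pm 1$, so that $E[I_i\mid A_\varepsilon]$ is finite and Jensen is legitimate.
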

		\begin{lemma}\cite{LS}\label{lem1}
			Let $f$ be a deterministic function in $L^2[0,1]$. Define $I_i(f)=\int_{0}^{1}f(t)\d B^i_t$. If the norm $\|\cdot\|$ dominates the $L^1$-norm then
			\begin{align}
				\lim_{\varepsilon\to 0} E [\exp(|I_i(f)|)|\|				B\|<\varepsilon]= 1.\nonumber
			\end{align}
		\end{lemma}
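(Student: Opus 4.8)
The plan is to reduce the statement to two uniform estimates and then combine them. Write $A_\varepsilon=\{\|B\|<\varepsilon\}$, which is positive for all small $\varepsilon>0$. Since $\exp(|I_i(f)|)\ge1$, one has $\liminf_{\varepsilon\to0}E[\exp(|I_i(f)|)\mid A_\varepsilon]\ge1$ automatically, so only the reverse inequality for the $\limsup$ is needed. Using the elementary bound $e^{|x|}-1\le|x|e^{|x|}$ together with the Cauchy--Schwarz inequality,
\[
0\le E\big[\exp(|I_i(f)|)-1\mid A_\varepsilon\big]\le\big(E[I_i(f)^2\mid A_\varepsilon]\big)^{1/2}\big(E[\exp(2|I_i(f)|)\mid A_\varepsilon]\big)^{1/2},
\]
so it suffices to prove \textbf{(i)} $\sup_{0<\varepsilon<\varepsilon_0}E[\exp(2|I_i(f)|)\mid A_\varepsilon]<\infty$, and \textbf{(ii)} $E[I_i(f)^2\mid A_\varepsilon]\to0$ as $\varepsilon\to0$. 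I would get (i) from a Cameron--Martin change of measure combined with Anderson's inequality, and (ii) by approximating $f$ in $L^2$ by smooth functions vanishing at the right endpoint and integrating by parts.

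For (i), fix $g\in L^2([0,1],\mathbb{R})$ and $c\in\mathbb{R}$, and let $H_g\in\mathcal{H}$ be the Cameron--Martin path whose $i$-th coordinate is $t\mapsto\int_0^t g(s)\,\mathrm{d}s$ and whose other coordinates vanish, so that $\|H_g\|_{\mathcal{H}}^2=\|g\|_{L^2}^2$. Applying Girsanov's theorem to the exponential martingale $\exp\!\big(cI_i(g)-\tfrac{c^2}{2}\|g\|_{L^2}^2\big)$ gives the identity
\[
E\big[\exp(cI_i(g))\,\mathbf{1}_{A_\varepsilon}\big]=\exp\!\Big(\tfrac{c^2}{2}\|g\|_{L^2}^2\Big)\,\mathbb{P}\big(\|B+cH_g\|<\varepsilon\big).
\]
Since $\{x:\|x\|<\varepsilon\}$ is convex and symmetric and the law of $B$ on the path space is a centered Gaussian measure, Anderson's inequality gives $\mathbb{P}(\|B+cH_g\|<\varepsilon)\le\mathbb{P}(A_\varepsilon)$, hence $E[\exp(cI_i(g))\mid A_\varepsilon]\le\exp(\tfrac{c^2}{2}\|g\|_{L^2}^2)$ for every $\varepsilon$; adding the estimates for $\pm c$ yields $E[\exp(c|I_i(g)|)\mid A_\varepsilon]\le2\exp(\tfrac{c^2}{2}\|g\|_{L^2}^2)$ uniformly in $\varepsilon$, and $g=f$, $c=2$ gives (i). Moreover, since $2\cosh(\lambda y)\ge2+\lambda^2y^2$, the same two-sided estimate with $\pm\lambda$ and $\lambda\to0$ shows that $E[I_i(g)^2\mid A_\varepsilon]\le\|g\|_{L^2}^2$ uniformly in $\varepsilon$, which I would use in (ii).

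For (ii), fix $\eta>0$ and choose $f_\eta\in C^1([0,1],\mathbb{R})$ with $f_\eta(1)=0$ and $\|f-f_\eta\|_{L^2}<\eta$ (possible because $C_c^\infty((0,1))$ is dense in $L^2[0,1]$), and write $f=f_\eta+r_\eta$. Integration by parts, using $f_\eta(1)=0$, gives $I_i(f_\eta)=-\int_0^1 f_\eta'(s)\,B_s^i\,\mathrm{d}s$, so on $A_\varepsilon$
\[
|I_i(f_\eta)|\le\|f_\eta'\|_\infty\int_0^1|B_s|\,\mathrm{d}s\le\|f_\eta'\|_\infty\|B\|_{L^1}\le C\,\|f_\eta'\|_\infty\,\varepsilon,
\]
using that $\|\cdot\|$ dominates the $L^1$-norm. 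Combining this with $E[I_i(r_\eta)^2\mid A_\varepsilon]\le\|r_\eta\|_{L^2}^2<\eta^2$ gives $E[I_i(f)^2\mid A_\varepsilon]\le2C^2\|f_\eta'\|_\infty^2\varepsilon^2+2\eta^2$, so $\limsup_{\varepsilon\to0}E[I_i(f)^2\mid A_\varepsilon]\le2\eta^2$; since $\eta>0$ is arbitrary, (ii) follows, and inserting (i) and (ii) into the Cauchy--Schwarz bound above finishes the argument.

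The main obstacle, as I see it, is step (i): turning the conditioning on the shrinking ball $A_\varepsilon$ into a bound that is uniform in $\varepsilon$. The crude inclusion $\{\|B+cH_g\|<\varepsilon\}\subset\{\|B\|<\varepsilon+c\|H_g\|\}$ is not sufficient, since exploiting it would require controlling the ratio $\mathbb{P}(\|B\|<\varepsilon+\delta)/\mathbb{P}(\|B\|<\varepsilon)$ and hence sharp small-ball asymptotics; Anderson's inequality is precisely what bypasses this. Step (ii) is routine once one remembers to pick the smooth approximants so that they vanish at $t=1$, which kills the boundary term in the integration by parts and leaves only the term controlled by $L^1$-domination; alternatively one could use a general $C^1$ approximant and absorb the surviving boundary term $f_\eta(1)B_1^i$ by means of $\mathbf{(H2)}$(ii), but that is messier.
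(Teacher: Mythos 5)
Your argument is correct, and it is essentially the proof of Shepp and Zeitouni in \cite{LS}, which is exactly the source the paper cites for this lemma without reproducing the proof: the two pillars (Cameron--Martin/Girsanov shift combined with Anderson's inequality to get an exponential bound uniform in $\varepsilon$, then approximation of $f$ by smooth functions vanishing at $t=1$, integration by parts, and the $L^1$-domination of the norm) are the same. Your only deviation is cosmetic, replacing their H\"older splitting of the exponential by the bound $e^{|x|}-1\le |x|e^{|x|}$ plus Cauchy--Schwarz and a conditional second-moment estimate, so no further comparison is needed.
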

		\begin{lemma}[\cite{LS1} \text{Theorem 1}]
			Assume that the norm $\|\cdot\|$ dominates the $L^{1}$-norm and satisfies
			$$
			\left\|\left(B^1, \ldots, B^i, \ldots, B^d\right)\right\|=\left\|\left(B^1, \ldots,-B^i, \ldots, B^d\right)\right\|.
			$$
			Let $\mathscr{F}_{i}$ be the $\sigma$-field generated by $\left\{B^1_t, \ldots, B^{i-1}_t,B^{i+1}_t, \ldots, B^{d}_t ; 0 \leq\right.$ $t \leq 1\} .$ Let $\Psi(\cdot)$ be an $\mathscr{F}_{i}$-adapted function such that
			$$
			\forall c \in \mathbb{R}^{+}, \lim _{\varepsilon \rightarrow 0} E\left(\exp \left\{c \int_{0}^{1} \Psi^{2}(t) \d t\right\} \mid\|B\|<\epsilon\right)=1.
			$$
			Then
			$$
			\lim _{\varepsilon \rightarrow 0} E\left(\exp \left\{\left|\int_{0}^{1} \Psi(t) \d B_t^i\right|\right\} \mid\|B\|<\epsilon\right)=1.
			$$
		\end{lemma}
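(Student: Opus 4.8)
The plan is to prove $\liminf_{\varepsilon\to0}E[\exp\{|\int_0^1\Psi\,\d B^i_t|\}\mid\|B\|<\varepsilon]\ge1$ and $\limsup_{\varepsilon\to0}E[\exp\{|\int_0^1\Psi\,\d B^i_t|\}\mid\|B\|<\varepsilon]\le1$ separately. The first is immediate from $\exp\{|\cdot|\}\ge1$ (and $P(\|B\|<\varepsilon)>0$ under the standing assumptions on $\|\cdot\|$, so the conditioning makes sense). Write $Z:=\int_0^1\Psi(t)\,\d B^i_t$ and $A_\varepsilon:=\{\|B\|<\varepsilon\}$. Everything will be reduced to the following \emph{key estimate}: for every $\lambda\in\mathbb{R}$,
$$
E\big[e^{\lambda Z}\,\big|\,A_\varepsilon\big]\ \le\ E\Big[\exp\big\{\tfrac{\lambda^{2}}{2}\textstyle\int_0^1\Psi^{2}(t)\,\d t\big\}\ \Big|\ A_\varepsilon\Big].
$$

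To obtain it I would condition on $\mathscr{F}_i$: under $\mathscr{F}_i$ the integrand $\Psi$ is deterministic and square--integrable, $B^i$ is a standard Brownian motion (being independent of $\mathscr{F}_i$), and the coordinates $(B^j)_{j\ne i}$ are frozen. Setting $g(t):=\int_0^t\Psi(s)\,\d s\in\mathcal{H}$, the Cameron--Martin formula applied in the variable $B^i$ gives
$$
E\big[\mathbf{1}_{A_\varepsilon}e^{\lambda Z}\,\big|\,\mathscr{F}_i\big]=\exp\big\{\tfrac{\lambda^{2}}{2}\textstyle\int_0^1\Psi^{2}\,\d t\big\}\;P\big(\|(B^1,\dots,B^i+\lambda g,\dots,B^d)\|<\varepsilon\,\big|\,\mathscr{F}_i\big).
$$
Now the set $\{w\in C([0,1],\mathbb{R}):\|(B^1,\dots,w,\dots,B^d)\|<\varepsilon\}$ is convex (a sublevel set of $w\mapsto\|(B^1,\dots,w,\dots,B^d)\|$, which is a norm composed with an affine map, hence convex) and symmetric in $w$ precisely by assumption $\mathbf{(H2)}$(i); hence Anderson's inequality applied to the centred Gaussian law of $B^i$ shows that the shifted small--ball probability above is bounded by $P(A_\varepsilon\mid\mathscr{F}_i)$. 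Taking expectations, using that $\exp\{\tfrac{\lambda^{2}}{2}\int_0^1\Psi^{2}\,\d t\}$ is $\mathscr{F}_i$--measurable, and dividing by $P(A_\varepsilon)$ yields the key estimate.

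By the hypothesis on $\Psi$ with $c=\lambda^{2}/2$, the right--hand side of the key estimate tends to $1$, so $\limsup_{\varepsilon\to0}E[e^{\lambda Z}\mid A_\varepsilon]\le1$ for every $\lambda\in\mathbb{R}$. Two consequences finish the argument. First, applying this with $\lambda=\pm2$ together with $e^{2|Z|}\le e^{2Z}+e^{-2Z}$ shows $\sup_{\varepsilon}E[e^{2|Z|}\mid A_\varepsilon]<\infty$, so the family $\{e^{|Z|}\}$ is uniformly integrable with respect to the conditional laws $P(\cdot\mid A_\varepsilon)$. Second, from $1\le E[\cosh Z\mid A_\varepsilon]\le\tfrac12\big(E[e^{Z}\mid A_\varepsilon]+E[e^{-Z}\mid A_\varepsilon]\big)\to1$ and the elementary bound $\cosh x\ge1+x^{2}/2$, one gets $E[Z^{2}\mid A_\varepsilon]\to0$, hence $Z\to0$ and $e^{|Z|}\to1$ in probability under $P(\cdot\mid A_\varepsilon)$. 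Uniform integrability then upgrades this to $E[e^{|Z|}\mid A_\varepsilon]\to1$, which is the assertion.

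The main obstacle is the key estimate, and within it the two delicate points: making the passage through the conditional Cameron--Martin formula rigorous — i.e.\ checking that conditioning on $\mathscr{F}_i$ genuinely reduces the computation to a Cameron--Martin shift of a one--dimensional Wiener measure with deterministic integrand — and recognising that $\mathbf{(H2)}$(i) is exactly the property making the relevant sublevel set of $\|\cdot\|$ symmetric, which is what licenses Anderson's inequality. Once the key estimate is in hand, the remaining steps are routine measure--theoretic arguments.
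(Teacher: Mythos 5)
Your argument is correct: the conditional Cameron--Martin shift given $\mathscr{F}_i$ (legitimate since $B^i$ is independent of $\mathscr{F}_i$ and $\Psi$ is then a fixed $L^2$ function), Anderson's inequality for the convex, symmetric sublevel set of $w\mapsto\|(B^1,\dots,w,\dots,B^d)\|$, and the final uniform-integrability/$\cosh$ step all go through as written. Note that the paper itself offers no proof of this lemma --- it is quoted from Shepp and Zeitouni \cite{LS1} --- and your reconstruction is essentially the argument of that cited source, so there is nothing to add beyond the routine measurability caveats you already flag.
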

	\begin{lemma}[\cite{PR} \text{Lemma 3.1}]\text{(It$\mathrm{\hat{o}}$ formula's  for distribution dependent function)}
		Assume $X_t$ satisfies $(2.1)$, then for any $h \in \mathscr{C}([0, \infty) \times$ $\left.\mathbb{R}^{d} \times \mathscr{P}_{2}\left(\mathbb{R}^{d}\right)\right), h\left(t, X_{t}, \mathscr{L}_{X_{t}}\right)$ is a semi-martingale with
		$$
		\mathrm{d} h\left(t, X_{t}, \mathscr{L}_{X_{t}}\right)=\left(\partial_{t}+\mathbf{L}_{f, g}\right) h\left(t, X_{t}, \mathscr{L}_{X_{t}}\right) \mathrm{d} t+\left\langle\left(g^{*} \partial_{x} h\right)\left(t, X_{t}, \mathscr{L}_{X_{t}}\right), \mathrm{d} B_{t}\right\rangle,
		$$
		where
		$$
		\begin{aligned}
			\mathbf{L}_{f, g} h(t, x, \mu)=& \frac{1}{2} \operatorname{tr}\left(g g^{*} \partial_{x}^{2} h\right)(t, x, \mu)+\left\langle f, \partial_{x} h\right\rangle(t, x, \mu) \\
			&+\int_{\mathbb{R}^d}\Big[\frac{1}{2} \operatorname{tr}\left\{\left(g g^{*}\right)(t, y, \mu) \partial_{y} \partial_{\mu} h(t, x, \mu)(y)\right\}+\langle f(t,y,\mu), \partial_{\mu}h(t,x,\mu)(y)\rangle\Big]\mu(\d y).
		\end{aligned}
	$$
	\end{lemma}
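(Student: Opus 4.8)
The statement coincides with Lemma 3.1 of \cite{PR}, so in the paper one may simply invoke that reference; for completeness I indicate the line of argument I would follow to prove it. The plan is to reduce everything to the classical finite-dimensional It\^o formula by replacing the measure argument with empirical measures of an interacting particle system and then taking a mean-field limit. (An alternative is to work with the Lions lift $\tilde h(t,x,\xi):=h(t,x,\mathscr{L}_\xi)$ on $[0,\infty)\times\mathbb{R}^d\times L^2$ and a Taylor expansion in $\mathbb{W}_2$, but the particle route makes the role of the hypotheses on $h$ most transparent.)

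\textbf{Step 1 (regularization).} First I would reduce to the case in which $h$ and all the derivatives appearing in the definition of $C^{1,2,(1,1)}$ are smooth and bounded. Using the joint continuity of these derivatives and the local integrability built into the class $\mathscr{C}$, one approximates $h$ by a sequence $h_m$ whose derivatives $\partial_t h_m,\partial_x h_m,\partial_x^2 h_m,\partial_\mu h_m,\partial_y\partial_\mu h_m$ converge to those of $h$ locally uniformly while staying dominated. It then suffices to prove the formula for each $h_m$ and pass to the limit, the stochastic term converging in probability since $(g^*\partial_x h_m)(t,X_t,\mathscr{L}_{X_t})\to(g^*\partial_x h)(t,X_t,\mathscr{L}_{X_t})$ in $L^2([0,T]\times\Omega)$ by the moment bounds on $X_t$ coming from $\mathbf{(H1)}$.

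\textbf{Step 2 (particle system and classical It\^o).} For fixed smooth $h$, introduce $N$ solutions $(X^i)_{i=1}^N$ of $(2.1)$ with i.i.d.\ initial data and independent driving Brownian motions, interacting through $\mu^N_t=\frac1N\sum_{j=1}^N\delta_{X^j_t}$, and apply the ordinary It\^o formula in $\mathbb{R}^{(N+1)d}$ to $(t,X^1_t,\dots,X^N_t)\mapsto h(t,X^1_t,\mu^N_t)$. The algebraic key is that for $H(x_1,\dots,x_N):=h(t,x,\tfrac1N\sum_j\delta_{x_j})$ one has $\partial_{x_i}H=\tfrac1N(\partial_\mu h)(t,x,\mu^N)(x_i)$ and $\partial_{x_i}^2 H=\tfrac1N(\partial_y\partial_\mu h)(t,x,\mu^N)(x_i)$ up to an extra term of order $N^{-2}$; collecting the drift and quadratic-variation contributions of the $X^i$'s (cross terms between distinct particles vanish by independence) turns $\tfrac1N\sum_i$ into an integral against $\mu^N$, producing exactly the operator $\mathbf{L}_{f,g}$ with $\mu$ replaced by $\mu^N$, modulo an $O(N^{-1})$ remainder. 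The one place care is needed is the bookkeeping for the first particle, which fills simultaneously the state slot and one atom of the measure, so that both $\partial_x h$ and $\partial_\mu h$ contribute to $\partial_{x_1}H$.

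\textbf{Step 3 (mean-field limit) and main obstacle.} Finally I would let $N\to\infty$. A standard coupling/Gronwall estimate under $\mathbf{(H1)}$ gives propagation of chaos: $\mathbb{W}_2(\mu^N_t,\mathscr{L}_{X_t})\to0$ in probability, uniformly on $[0,T]$, together with uniform-in-$N$ moment bounds. Combined with the joint continuity and domination of the derivatives of $h$, one passes to the limit term by term. The delicate point, and the main obstacle, is the convergence $\int_{\mathbb{R}^d}[\cdots]\mu^N_s(\d y)\to\int_{\mathbb{R}^d}[\cdots]\mathscr{L}_{X_s}(\d y)$ for the two second-order-in-measure integrands: this is precisely where one uses the local boundedness of $(t,x,\mu)\mapsto\int_{\mathbb{R}^d}\{\|\partial_y\partial_\mu h\|+\|\partial_\mu h\|^2\}(t,x,\mu)(y)\,\mu(\d y)$ built into the definition of $\mathscr{C}$, in tandem with the uniform moment estimates for the particle system, to pass to the limit in $s$-uniformly integrable fashion. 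Once Step 1 has been arranged, removing the regularization in Step 3 is routine.
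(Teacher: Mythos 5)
You are right that the paper itself supplies no proof here: the lemma is quoted verbatim from Ren and Wang \cite{PR} (their Lemma 3.1), so the part of your proposal that coincides with the paper is precisely the one-line invocation of that reference. The constructive sketch you then append is a genuinely different route (empirical projection plus propagation of chaos), and as written it has a real gap relative to the stated hypotheses.

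In Step 2 you apply the classical It\^o formula to $H(x_1,\dots,x_N)=h\big(t,x_1,\tfrac1N\sum_{j}\delta_{x_j}\big)$. This requires $H$ to be twice differentiable in every particle coordinate. Differentiating $\partial_{x_j}H=\tfrac1N(\partial_\mu h)(t,x_1,\mu^N)(x_j)$ once more in $x_j$ produces, besides the admissible term $\tfrac1N(\partial_y\partial_\mu h)(t,x_1,\mu^N)(x_j)$, a contribution of the form $\tfrac1{N^2}(\partial^2_\mu h)(t,x_1,\mu^N)(x_j,x_j)$ involving the second-order $L$-derivative. Neither $C^{1,2,(1,1)}$ nor the class $\mathscr{C}$ guarantees that $\partial^2_\mu h$ exists, so the ``extra term of order $N^{-2}$'' cannot even be written down, and It\^o's formula is simply not applicable to $H$ under the lemma's assumptions. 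Your Step 1 is meant to cure this by regularization, but mollifying a function on $\mathscr{P}_2(\mathbb{R}^d)$ so as to create second-order $L$-derivatives while keeping locally uniform, dominated convergence of $\partial_x h$, $\partial_x^2 h$, $\partial_\mu h$, $\partial_y\partial_\mu h$ is itself a substantial construction (there is no canonical convolution on Wasserstein space); as stated it is an assertion, not a reduction. Note also that the hypotheses point to a much lighter argument, which is essentially the one behind \cite{PR}: since $\mathscr{L}_{X_t}$ is a deterministic flow of measures, no It\^o expansion in the measure variable is needed at all. It suffices to prove a first-order chain rule showing that $t\mapsto h(t,x,\mathscr{L}_{X_t})$ is differentiable with derivative $\int_{\mathbb{R}^d}\big[\tfrac12\operatorname{tr}\{(gg^*)(t,y,\mu_t)\,\partial_y\partial_\mu h(t,x,\mu_t)(y)\}+\langle f(t,y,\mu_t),\partial_\mu h(t,x,\mu_t)(y)\rangle\big]\mu_t(\mathrm{d}y)$ with $\mu_t=\mathscr{L}_{X_t}$, using only the definition of the $L$-derivative, $\mathbf{(H1)}$ and the integrability condition built into $\mathscr{C}$, and then to apply the classical It\^o formula in $(t,x)$ to the time-dependent function $(t,x)\mapsto h(t,x,\mu_t)$. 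That is exactly why the lemma requires only $\partial_\mu h$ and $\partial_y\partial_\mu h$, whereas the particle-system route naturally demands full second-order regularity in the measure.
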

		\section{Main results and Proofs}
		Limited to the fact that the Girsanov transform cannot fully extract the information of the OM action functional when $g$ is not a constant diffusion matrix. For this reason and without loss of generality, we consider the following Mckean-Vlasov SDE with constant diffusion matrix $g=I_{d\times d}$:
			\begin{align}
			dX_t=f(t,X_t,\mathscr{L}_{X_{t}})\d t+\d B_t, X_0=x.
		\end{align}
	
	We now state our main result associated with OM action functional for $(3.1)$.
	\begin{theorem}
		Assume that $f \in C_{b}^{1,2,(1,1)}\left([0, 1] \times \mathbb{R}^{d} \times \mathscr{P}_{2}\left(\mathbb{R}^{d}\right)\right)$ and $\mathbf{(H1)}-\mathbf{(H3)}$ hold. Reference path $\phi_t-x$ belongs to Cameron-Martin $\mathcal{H}$. Then the Onsager-Machlup action functional of $X_t$ for any norm dominating $L^2([0,1],\mathbb{R}^d)$  exists and is given by
		$$L(t,\phi,\dot{\phi},\delta_{\phi})=-\frac{1}{2}\int_{0}^{1}|\dot{\phi}_t-f(t,\phi_t,\mathscr{L}_{\phi_t})|^2\ dt-\frac{1}{2}\int_{0}^{1}\operatorname{div}_{x}f(t,\phi_t,\mathscr{L}_{\phi_t})\d t,$$
		where $\operatorname{div}_{x}$  denote the divergence on the $\phi_t\in\mathbb{R}^d$.
	\end{theorem}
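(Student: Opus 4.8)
The plan is to run the classical Onsager--Machlup scheme --- a Girsanov transformation onto the Wiener space, a Cameron--Martin translation that centres the tube at the origin, and then a small-ball estimate --- with the one new ingredient that the It\^o formula for distribution-dependent functions (Lemma 2.8) takes over the role played classically by a Taylor expansion of the drift inside a stochastic integral. Concretely, freezing $\mu_s:=\mathscr L_{X_s}$ makes (3.1) the ordinary It\^o equation $\d X_s=f(s,X_s,\mu_s)\,\d s+\d B_s$; with $\mathbb P^0$ the law of $x+B$, Girsanov's theorem gives $\tfrac{\d\mathbb P}{\d\mathbb P^0}=\exp\bigl(\int_0^1\langle f(s,\omega_s,\mu_s),\d\omega_s\rangle-\tfrac12\int_0^1|f(s,\omega_s,\mu_s)|^2\,\d s\bigr)$, and, since $\phi_\cdot-x\in\mathcal H$, the Cameron--Martin translation $\omega\mapsto\phi+B$ turns $\mathbf 1_{\{\|\omega-\phi\|\le\varepsilon\}}$ into $\mathbf 1_{\{\|B\|\le\varepsilon\}}$ and produces
$$
\mathbb P(\|X-\phi\|\le\varepsilon)=P(\|B\|\le\varepsilon)\,E\bigl[e^{\Gamma_\varepsilon}\mid\|B\|\le\varepsilon\bigr],
$$
where, using $\d(\phi_s+B_s)=\dot\phi_s\,\d s+\d B_s$,
$$
\Gamma_\varepsilon=\int_0^1\langle f(s,\phi_s+B_s,\mu_s),\d B_s\rangle+\int_0^1\langle f(s,\phi_s+B_s,\mu_s),\dot\phi_s\rangle\,\d s-\tfrac12\int_0^1|f(s,\phi_s+B_s,\mu_s)|^2\,\d s-\int_0^1\langle\dot\phi_s,\d B_s\rangle-\tfrac12\int_0^1|\dot\phi_s|^2\,\d s.
$$
Hence $C(\varepsilon)=P(\|B\|\le\varepsilon)$, and everything reduces to showing $E[e^{\Gamma_\varepsilon}\mid\|B\|\le\varepsilon]\to\exp\bigl(\int_0^1 L(t,\phi_t,\dot\phi_t,\delta_{\phi_t})\,\d t\bigr)$.

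Since $\|\cdot\|$ dominates $L^2$, on $\{\|B\|\le\varepsilon\}$ one has $\int_0^1|B_s|^2\,\d s\lesssim\varepsilon^2\to0$; combined with the Lipschitz bound (H1)(i) and the boundedness of $f$, the three Lebesgue integrals in $\Gamma_\varepsilon$ collapse, up to an error tending to $0$, onto $-\tfrac12\int_0^1|\dot\phi_s-f(s,\phi_s,\mu_s)|^2\,\d s$; and $\int_0^1\langle\dot\phi_s,\d B_s\rangle$ has a deterministic $L^2$ integrand, so $E[e^{|\int_0^1\langle\dot\phi_s,\d B_s\rangle|}\mid\|B\|\le\varepsilon]\to1$ by Lemmas 2.5--2.6.

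The heart of the matter is the stochastic integral $\int_0^1\langle f(s,\phi_s+B_s,\mu_s),\d B_s\rangle$, which cannot be handled by expanding the integrand as in \cite{NI}. Instead one regards $\widehat X_s:=\phi_s+B_s$ as a solution of an equation of the form (2.1), with drift $\dot\phi$ and diffusion $I$, and applies the It\^o formula of Lemma 2.8 to suitable functions of class $\mathscr C([0,\infty)\times\mathbb R^d\times\mathscr P_2(\mathbb R^d))$ manufactured from $f$ --- membership in $\mathscr C$ being ensured by $f\in C_b^{1,2,(1,1)}$. This rewrites the stochastic integral as a boundary term plus a $\d s$-integral plus a lower-order stochastic integral: the boundary term is $O(|B_1|)+O(|B_1|^2)$ and is absorbed by (H2)(ii); the second-order $\tfrac12\operatorname{tr}(\partial_x^2\cdot)$ part of the generator $\mathbf L_{f,I}$ produces, after letting $B_s\to0$ and invoking (H1)(iii) to replace $\operatorname{div}_{x}f(s,\phi_s+B_s,\mu_s)$ by $\operatorname{div}_{x}f(s,\phi_s,\mu_s)$, exactly the term $-\tfrac12\int_0^1\operatorname{div}_{x}f(s,\phi_s,\mu_s)\,\d s$; and the extra $\partial_\mu$- and $\partial_y\partial_\mu$-contributions peculiar to the McKean--Vlasov setting, as well as the residual lower-order stochastic integral, are deferred to the negligible part.

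All the remaining random terms fall into three classes, each killed in conditional expectation: stochastic integrals with deterministic $L^2$ integrands (Lemma 2.6); stochastic integrals $\int_0^1\Psi\,\d B^i_s$ whose integrand $\Psi$ is adapted to $\sigma(B^j:j\neq i)$ and satisfies $\int_0^1\Psi^2\,\d s\lesssim\int_0^1|B_s|^2\,\d s\lesssim\varepsilon^{p/2}$ by (H3)(i), so that Lemma 2.7 applies (its symmetry hypothesis being (H2)(i)); and Lebesgue- or boundary-terms of the form $c\,|B_1|^2$ or $\int_0^1\eta(s)|B_s|^2\,\d s$ with $\eta\in L^1$, where (H3)(i) paired with $\dot\phi\in L^2$ via Cauchy--Schwarz together with (H2)(ii) does the job --- the small-ball lower bound (H3)(ii) entering when rare events must be absorbed against $P(\|B\|\le\varepsilon)$. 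Lemma 2.5 then assembles these into $E[e^{\mathrm{rem}}\mid\|B\|\le\varepsilon]\to1$, giving
$$
E\bigl[e^{\Gamma_\varepsilon}\mid\|B\|\le\varepsilon\bigr]\longrightarrow\exp\Bigl(-\tfrac12\int_0^1|\dot\phi_s-f(s,\phi_s,\delta_{\phi_s})|^2\,\d s-\tfrac12\int_0^1\operatorname{div}_{x}f(s,\phi_s,\delta_{\phi_s})\,\d s\Bigr),
$$
which is the assertion. I expect the stochastic-integral step to be the real obstacle: the Taylor device of \cite{NI} is unavailable, so one must run the distribution-dependent It\^o formula and then keep firm control of both its genuinely new $\partial_\mu$-contributions and the second-order remainder stochastic integral --- the latter being exactly what forces assumptions (H1)(iii) and (H3)(i) into the argument.
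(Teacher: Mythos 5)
Your overall skeleton (Girsanov plus Cameron--Martin translation, conditioning on $\{\|B\|\le\varepsilon\}$, the distribution-dependent It\^o formula of Lemma 2.8 in place of a Taylor expansion, and Lemmas 2.5--2.7 with (H2)--(H3) to kill the remainders) is the same as the paper's, but there is a genuine gap at the very start that propagates to the end: you freeze $\mu_s:=\mathscr L_{X_s}$, the law of the \emph{actual} solution, carry $f(s,\phi_s+B_s,\mu_s)$ through the whole computation, and then in the final display silently replace $\mu_s$ by $\delta_{\phi_s}$. Nothing in your argument justifies this substitution: the flow $(\mu_s)$ is a fixed deterministic family of (typically diffuse) measures, conditioning on $\{\|B\|\le\varepsilon\}$ does not act on it, and $\mathbb W_2(\mathscr L_{X_s},\delta_{\phi_s})$ does not tend to $0$ as $\varepsilon\to0$. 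What your route would actually deliver (granting all the remainder estimates) is an Onsager--Machlup functional with drift $f(t,\phi_t,\mathscr L_{X_t})$, which is a different statement from the theorem. The paper avoids this by setting up the Girsanov density directly for $Y_t=\phi_t+B_t$ with measure argument $\mathscr L_{Y_t}=\mathscr L_{\phi_t+B_t}$, so that every comparison made after conditioning (the terms $T_1$, $T_2$, $T_7$, $T_8$) is between $f(t,\phi_t+B_t,\mathscr L_{\phi_t+B_t})$ and $f(t,\phi_t,\delta_{\phi_t})$, controlled through (H1) together with the Wasserstein bound of Remark 2.1(i); this is exactly the mechanism that lets $\delta_{\phi_t}$ appear in the limit, and it is absent from your setup. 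If you keep your frozen-law Girsanov step you must either prove the statement with $\mathscr L_{X_t}$ in place of $\delta_{\phi_t}$ or supply an extra argument comparing the two, neither of which is in your sketch.

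There is also a bookkeeping error in the divergence extraction. Applying Lemma 2.8 to $f_i(t,\phi_t+B_t,\mathscr L_{\phi_t+B_t})B_t^i$ produces the \emph{full} term $-\int_0^1\operatorname{div}_xf\,\d t$ (the cross term in $\tfrac12\operatorname{tr}\partial_x^2$ of the product), together with the residual stochastic integral $-\sum_{i,j}\int_0^1B_t^i\,\partial_{x_j}f_i\,\d B_t^j$. You claim the residual stochastic integral is entirely negligible and that the generator alone yields $-\tfrac12\int_0^1\operatorname{div}_xf\,\d t$; if that were so the constant would come out as $-1$, not $-\tfrac12$. In the paper, only the off-diagonal part ($i\neq j$, via Lemma 2.7 and (H2)(i)) and the difference of the integrand between $\phi+B$ and $\phi$ (the term $T_8$, via the martingale exponential inequality, (H1)(iii), (H3)) are negligible; the diagonal part $\sum_i\int_0^1\partial_{x_i}f_i(t,\phi_t,\delta_{\phi_t})B_t^i\,\d B_t^i$ is \emph{not} negligible and, after a second application of the It\^o formula (equation (3.13)), returns $+\tfrac12\int_0^1\operatorname{div}_xf\,\d t$ up to negligible boundary and $(B^i)^2$-terms handled by (H2)(ii). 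Your proposal needs this second It\^o step (or an equivalent device) to land on the correct factor $\tfrac12$.
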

\begin{proof}
	Let $Y_t$ be the solution for following SDE
	$$Y_t=\phi_t+B_t, Y_0=x\in\mathbb{R}^d.$$
	Due to $f \in C_{b}^{1,2,(1,1)}\left([0, 1] \times \mathbb{R}^{d} \times \mathscr{P}_{2}\left(\mathbb{R}^{d}\right)\right)$ and $\phi_t\in \mathcal{H}$, the Novikov condition is clearly satisfied. Girsanov theorem imply that $\hat{B}_t=B_t-\int_{0}^{t}[f(u,Y_u,\mathscr{L}_{Y_{u}})-\dot{\phi}_u]\d u$ is a $d$-dimensional Brownian motion under new probability  defined by $\frac{d\mathbb{Q}}{d\mathbb{P}}:=R$ with
	$$R:=\exp\Big( \int_{0}^{1}\langle f(t,Y_t,\mathscr{L}_{Y_{t}})-\dot{\phi}_t, \d B_t\rangle-\frac{1}{2}\int_{0}^{1}|f(t,Y_t,\mathscr{L}_{Y_{t}})-\dot{\phi}_t|^2\d t\Big).$$
	 So we have
	\begin{align}
		\frac{\mathbb{P}(\|X-\phi\|\leq\varepsilon)}{\mathbb{P}(\|B\|\leq\varepsilon)}&=\frac{\mathbb{Q}(\|Y-\phi\|\leq\varepsilon)}{\mathbb{P}(\|B\|\leq\varepsilon)}=\frac{E\Big(R \mathbb{I}_{\|B
				\|\leq\varepsilon}\Big)}{\mathbb{P}(\|B\|\leq\varepsilon)}=E\Big(R|\|B\|\leq\varepsilon\Big)\nonumber\\&= E\Big(\exp\Big(\int_{0}^{1}\langle f(t,Y_t,\mathscr{L}_{Y_{t}})-\dot{\phi}_t, \d B_t\rangle-\frac{1}{2}\int_{0}^{1}|f(t,Y_t,\mathscr{L}_{Y_{t}})-\dot{\phi}_t|^2\d t\Big)|\|B\|\leq\varepsilon\Big)\nonumber\\&:=\exp\Big(-\frac{1}{2}\int_{0}^{1}|\dot{\phi}_t-f(t,\phi_t,\mathscr{L}_{\phi_{t}})|^2\d t\Big)E\Big(\exp(\sum_{i=1}^{4}T_i)|\|B\|\leq\varepsilon\Big),
	\end{align}
where
\begin{align}
	T_1&:=\int_{0}^{1}\langle f(t,\phi_t+B_t,\mathscr{L}_{\phi_t+B_t})-f(t,\phi_t,\mathscr{L}_{\phi_t}),\dot\phi_t\rangle \mathrm{d}t,\nonumber\\
	T_2&:=\frac{1}{2}\int_{0}^{1}\Big|f(t,\phi_t,\mathscr{L}_{\phi_t})\Big|^2\mathrm{d}t-\frac{1}{2}\int_{0}^{1}\Big|f(t,\phi_t+B_t,\mathscr{L}_{\phi_t+B_t})\Big|^2\mathrm{d}t,\nonumber\\
	T_3&:=\int_{0}^{1}\langle-\dot{\phi}_t,\mathrm{d}B_t\rangle,\nonumber\\T_4&=\int_{0}^{1}\langle f(t,\phi_t+B_t,\mathscr{L}_{\phi_t+B_t}),\mathrm{d}B_t\rangle.\nonumber
\end{align}

Firstly, we deal with the terms $T_1$ and $T_2$. By $\mathbf{(H1)}(i)$, $(2.2)$, H$\mathrm{\ddot{o}}$lder inequality, Fubini theorem and the assumptions on $f$ and $\phi$, we have the following estimates on the set $\{\|B\|\leq \varepsilon\}$.
\begin{align}
	|T_1|&\leq \int_{0}^{1}|\dot{\phi}_t|K(t)\Big(|B_t|+\mathbb{W}_2(\mathscr{L}_{\phi_t+B_t},\mathscr{L}_{\phi_t})\Big)\d t\leq C_3\Big[\Big(\int_{0}^{1}|B_t|^2\d t\Big)^{\frac{1}{2}}+\Big(\int_{0}^{1}E(B_t^2)\d t\Big)^{\frac{1}{2}}\Big]\leq 2C_3 \varepsilon ,\nonumber\\|T_2|&\leq C_4\Big[\Big(\int_{0}^{1}|B_t|^2\d t\Big)^{\frac{1}{2}}+\Big(\int_{0}^{1}E(B_t^2)\d t\Big)^{\frac{1}{2}}\Big]\leq 2C_4\varepsilon.\nonumber
\end{align}
So we easily get
\begin{align}
	\limsup _{\varepsilon \rightarrow 0}\ E(\exp(cT_{1})|\|B\|<\varepsilon)\leq1,
\end{align}
\begin{align}
	\limsup _{\varepsilon \rightarrow 0}\ E(\exp(cT_{2})|\|B\|<\varepsilon)\leq1,
\end{align}
for  every real number $c$.

We proceed to show that
\begin{align}
	\limsup _{\varepsilon \rightarrow 0}\ E(\exp(cT_{3})|\|B\|<\varepsilon)\leq1,
\end{align}
for  every real number $c$. It is clear by  $\phi_t\in\mathcal{H}$ and Lemma $2.6$.

Therefore, by Lemma $2.5$, we obtain $T_1, T_2, T_3$ have no contributions on the expression of OM action functional, i.e.
\begin{align}
		\limsup _{\varepsilon \rightarrow 0}\ E(\exp(c\sum_{i=1}^{3}T_{i})|\|B\|<\varepsilon)\leq1,
\end{align}
for  every real number $c$.

We now deal with the term $T_4$. Applying Lemma $2.8$ to $f_i(t,\phi_t+B_t,\mathscr{L}_{\phi_t+B_t})B_t^i$, we get
\begin{align}
\int_{0}^{1}f_i(t,\phi_t+B_t,\mathscr{L}_{\phi_t+B_t})\d B_t^i&= f_i(1,\phi_1+B_1,\mathscr{L}_{\phi_1+B_1})B_1^i-\int_{0}^{1}B_t^i(\partial_{t}+\mathbf{L}_{1}^{i})f_i(t,\phi_t+B_t,\mathscr{L}_{\phi_t+B_t})\d t\nonumber\\-&\sum_{j=1}^{d}\int_{0}^{1}B_t^i\partial_{x_j}f_i(t,\phi_t+B_t,\mathscr{L}_{\phi_t+B_t})\d B_t^j-\int_{0}^{1}\partial_{x_i}f_i(t,\phi_t+B_t,\mathscr{L}_{\phi_t+B_t})\d t,
\end{align}
where
$$
\begin{aligned}
	\mathbf{L}^i_{1} f_i(t, x, \mu)=& \frac{1}{2} \operatorname{tr}\left( \partial_{x}^{2} f_i\right)(t, x, \mu)+\left\langle \dot{\phi}, \partial_{x} f_i\right\rangle(t, x, \mu) \\
	&+\int_{\mathbb{R}^d}\Big[\frac{1}{2} \operatorname{tr}\left\{ \partial_{y} \partial_{\mu} f_i(t, x, \mu)(y)\right\}+\langle \dot{\phi}_t, \partial_{\mu}f_i(t,x,\mu)(y)\rangle\Big]\mu(\d y).
\end{aligned}
$$
So we can rewrite $T_4$ as
\begin{align}
	T_4&= \sum_{i=1}^df_i(1,\phi_1+B_1^i,\mathscr{L}_{\phi_1+B_1})B_1^i-\sum_{i=1}^d\int_{0}^{1}B_t^i(\partial_{t}+\mathbf{L}_{1}^{i})f_i(t,\phi_t+B_t,\mathscr{L}_{\phi_t+B_t})\d t\nonumber\\&-\sum_{i=1}^d\sum_{j=1}^{d}\int_{0}^{1}B_t^i\partial_{x_j}f_i(t,\phi_t+B_t,\mathscr{L}_{\phi_t+B_t})\d B_t^j-\sum_{i=1}^{d}\int_{0}^{1}\partial_{x_i}f_i(t,\phi_t+B_t,\mathscr{L}_{\phi_t+B_t})\d t\nonumber\\&:=T_5+T_6+T_7+T_8,
\end{align}
where
$$
\begin{aligned}
	T_5=&\sum_{i=1}^df_i(1,\phi_1+B_1^i,\mathscr{L}_{\phi_1+B_1})B_1^i-\sum_{i=1}^d\int_{0}^{1}B_t^i(\partial_{t}+\mathbf{L}_{1}^{i})f_i(t,\phi_t+B_t,\mathscr{L}_{\phi_t+B_t})\d t,\nonumber\\T_6=&-\sum_{i=1}^d\sum_{j=1}^{d}\int_{0}^{1}B_t^i\partial_{x_j}f_i(t,\phi_t,\mathscr{L}_{\phi_t})\d B_t^j-\sum_{i=1}^{d}\int_{0}^{1}\partial_{x_i}f_i(t,\phi_t,\mathscr{L}_{\phi_t})\d t,\nonumber\\T_7=&\sum_{i=1}^{d}\int_{0}^{1}\Big(\partial_{x_i}f_i(t,\phi_t+B_t,\mathscr{L}_{\phi_t+B_t})-\partial_{x_i}f_i(t,\phi_t,\mathscr{L}_{\phi_t})\Big)\d t,\\ T_8=&\sum_{i=1}^d\sum_{j=1}^{d}\int_{0}^{1}B_t^i\Big(\partial_{x_j}f_i(t,\phi_t,\mathscr{L}_{\phi_t})-\partial_{x_j}f_i(t,\phi_t+B_t,\mathscr{L}_{\phi_t+B_t})\Big)\d B_t^j.
\end{aligned}
$$
By Lemma $2.5$, we can deal with each term respectively. Since $f \in C_{b}^{1,2,(1,1)}\left([0, 1] \times \mathbb{R}^{d} \times \mathscr{P}_{2}\left(\mathbb{R}^{d}\right)\right)$, then under the condition $\|B\|\leq\varepsilon$ we have
\begin{align}
	\limsup _{\varepsilon \rightarrow 0}\ E(\exp(-c\sum_{i=1}^d\int_{0}^{1}B_t^i(\partial_{t}+\mathbf{L}_{1}^{i})f_i(t,\phi_t,\mathscr{L}_{\phi_t+B_t})\d t)|\|B\|<\varepsilon)\leq1,
\end{align}
for  every real number $c$.\\
By the boundness of $f_i$ and Lemma $2.6$, we have ($B^1_i=\int_{0}^{1}dB_t^i$)
\begin{align}
	\limsup _{\varepsilon \rightarrow 0}\ E(\exp(cf_i(1,\phi_1+B_1^i,\mathscr{L}_{\phi_1+B_1})B_1^i)|\|B\|<\varepsilon)\leq1,
\end{align}
for  $1\leq i\leq d$ and every real number $c$.

 So by Lemma $2.5$, we obtain
 \begin{align}
 	\limsup _{\varepsilon \rightarrow 0}\ E(\exp(cT_5)|\|B\|<\varepsilon)\leq1,
 \end{align}
 for  every real number $c$.

For the term $T_6$, we need to divide it into three parts:
$$-\sum_{i\neq j}^d\int_{0}^{1}\partial_{x_j}f_i(t,\phi_t,\mathscr{L}_{\phi_t})B_t^j\d B_t^i-\sum_{i=1}^{d}\int_{0}^{1}\partial_{x_i}f_i(t,\phi_t,\mathscr{L}_{\phi_t})B_t^i\d B_t^i-\sum_{i=1}^{d}\int_{0}^{1}\partial_{x_i}f_i(t,\phi_t,\mathscr{L}_{\phi_t})\d t.$$
\\
By $\mathbf{(H2)}$ $(i)$, Lemma $2.5$ and Lemma $2.7$, we can easily get
\begin{align}
	\limsup _{\varepsilon \rightarrow 0}\ E(\exp(-c\sum_{i\neq j}^d\int_{0}^{1}\partial_{x_j}f_i(t,\phi_t,\mathscr{L}_{\phi_t})B_t^j\d B_t^i)|\|B\|<\varepsilon)\leq1,
\end{align}
for  every real number $c$.

 Similarly, applying Lemma $2.8$ to $\partial_{x_i}f_i(t,\phi_t,\mathscr{L}_{\phi_t})(B_t^i)^2$ we obtain
 \begin{align}
 	\int_{0}^{1}\partial_{x_i}f_i(t,\phi_t,\mathscr{L}_{\phi_t})B_t^i\d B_t^i&= \frac{1}{2}\partial_{x_i}f_i(1,\phi_1,\mathscr{L}_{\phi_1})(B_1^i)^2-\frac{1}{2}\int_{0}^{1}(B_t^i)^2(\partial_{t}+\mathbf{L}_{2}^{i})f_i(t,\phi_t,\mathscr{L}_{\phi_t})\d t\nonumber\\&-\frac{1}{2}\int_{0}^{1}\partial_{x_i}f_i(t,\phi_t,\mathscr{L}_{\phi_t})\d t,
 \end{align}
 where
\begin{align}
		\mathbf{L}_2^{i}\partial_{x_i}f_i(t,x,\mu)=\langle \dot{\phi}, \partial_{x} \partial_{x_i}f_i\rangle(t, x, \mu) +\int_{\mathbb{R}^d}\langle\dot{\phi}_t, \partial_{\mu}[\partial_{x_i}f_i(t,x,\mu)](y)\rangle\mu(\d y).\nonumber
\end{align}
So we have
\begin{align}
	-\sum_{i=1}^d\int_{0}^{1}\partial_{x_i}f_i(t,\phi_t,\mathscr{L}_{\phi_t})B_t^i\d B_t^i&= -\sum_{i=1}^d\frac{1}{2}\partial_{x_i}f_i(1,\phi_1,\mathscr{L}_{\phi_1})(B_1^i)^2+\sum_{i=1}^d\frac{1}{2}\int_{0}^{1}(B_t^i)^2(\partial_{t}+\mathbf{L}_{2}^{i})f_i(t,\phi_t,\mathscr{L}_{\phi_t})\d t\nonumber\\&+\sum_{i=1}^d\frac{1}{2}\int_{0}^{1}\partial_{x_i}f_i(t,\phi_t,\mathscr{L}_{\phi_t})\d t.
\end{align}
Since the derivatives of $f_i$ are bounded, so by
 $\mathbf{(H2)}$(ii$)$, we have
 \begin{align}
 	\limsup _{\varepsilon \rightarrow 0}\ E(\exp(-\frac{c}{2}\partial_{x_i}f_i(1,\phi_1,\mathscr{L}_{\phi_1})(B_1^i)^2)|\|B\|<\varepsilon)\leq1,\nonumber
 \end{align}
 for $1\leq i\leq d$ and every real number $c$.

 And $f \in C_{b}^{1,2,(1,1)}\left([0, 1] \times \mathbb{R}^{d} \times \mathscr{P}_{2}\left(\mathbb{R}^{d}\right)\right)$ and $\phi\in\mathcal{H}$, we have
 \begin{align}
 	\limsup _{\varepsilon \rightarrow 0}\ E(\exp(\frac{c}{2}\int_{0}^{1}(B_t^i)^2(\partial_{t}+\mathbf{L}_{2}^{i})f_i(t,\phi_t,\mathscr{L}_{\phi_t})\d t)|\|B\|<\varepsilon)\leq1,\nonumber
 \end{align}
 for $1\leq i\leq d$ and every real number $c$.

 Therefore by Lemma $2.5$, we obtain
\begin{align}
	\limsup _{\varepsilon \rightarrow 0}\ E(\exp(c(T_6+\frac{1}{2}\int_{0}^{1}\operatorname{div}_{x}{f}(t,\phi_t,\mathscr{L}_{\phi_t})\d t))\|B\|<\varepsilon)\leq1,\nonumber
\end{align}
for  every real number $c$. And $-\frac{1}{2}\int_{0}^{1}\operatorname{div}_{x}{f}(t,\phi_t,\mathscr{L}_{\phi_t})\d t$ is the divergence part of OM action functional.

Moreover, on the set $\{\|B\|\leq \varepsilon\}$ using $\mathbf{(H1)}$$(iii)$, we have for $1\leq i\leq d$
\begin{align}
	\int_{0}^{1}|\partial_{x_i}f_i(t,\phi_t+B_t,\mathscr{L}_{\phi_t+B_t})-\partial_{x_i}f_i(t,\phi_t,\mathscr{L}_{\phi_t})|\d t\leq C_6\Big[\Big(\int_{0}^{1}|B_t|^2\d t\Big)^{\frac{1}{2}}+\Big(\int_{0}^{1}E(B_t^2)\d t\Big)^{\frac{1}{2}}\Big]\leq 2C_6\varepsilon,\nonumber
\end{align}
so by Lemma $2.5$, we get
\begin{align}
	\limsup _{\varepsilon \rightarrow 0}\ E(\exp(cT_7)\|B\|<\varepsilon)\leq1,\nonumber
\end{align}
for  every real number $c$.

Finally, we deal with term $T_{8}$ by estimation of small ball probability. We first define $T_8^{ij}:=\int_{0}^{1}B_t^i\Big(\partial_{x_j}f_i(t,\phi_t,\mathscr{L}_{\phi_t})-\partial_{x_j}f_i(t,\phi_t+B_t,\mathscr{L}_{\phi_t+B_t})\Big)\d B_t^j$ and rewrite $\mathbb{P}(|T^{ij}_{8}|>\delta|\|B\|\leq \varepsilon)$ as
\begin{align}
	\mathbb{P}(|T^{ij}_{8}|>\delta|\|B_t\|\leq \varepsilon)=\frac{\mathbb{P}(|T_{8}|>\delta, \|B\|\leq \varepsilon)}{\mathbb{P}(\|B\|\leq\varepsilon)}.\nonumber
\end{align}
Since $T_{8}^{ij}$ is a martingale, and its quadratic variations can be estimated by $\mathbf{(H1)}$$(iii)$, $\mathbf{H3}$ $(i)$ on the set $\{\|B\|\leq \varepsilon\}$ as follows,
\begin{align}
	\langle T^{ij}_{8} \rangle_t=&\int_{0}^{1}\Big|B_t^i\Big(\partial_{x_j}f_i(t,\phi_t,\mathscr{L}_{\phi_t})-\partial_{x_j}f_i(t,\phi_t+B_t,\mathscr{L}_{\phi_t+B_t})\Big)\Big|^2\d t\nonumber\\&\leq C_7\Big[\int_{0}^{1}|B_t|^4\d t+\int_{0}^{1}|B_t|^2E[B_t^2 ]\d t\Big]\nonumber\\&\leq C_7\Big[\frac{3}{2}\int_{0}^{1}|B_t|^4\d t+\frac{1}{2}\int_{0}^{1}E[B_t^4]\d t\Big] \leq 2C_{7}\varepsilon^{p}.
\end{align}
for $1\leq i,j \leq d$. So by the standard exponential inequality for martingales (similar pp196 in \cite{MC1} or pp1247 in \cite{XB2}), we have
\begin{align}
	\mathbb{P}(|T^{ij}_{8}|>\delta, \|B\|\leq \varepsilon)\leq \exp\Big(-\frac{\delta^2}{2C_{8}\varepsilon^{p}}\Big).
\end{align}
Combining assumption  $\mathbf{H 3}$ $(ii)$, by Lemma $2.5$ we get
\begin{align}
	\limsup _{\varepsilon \rightarrow 0}\ E(\exp(cT_{8})|\|B\|<\varepsilon)\leq1,
\end{align}
for every real number $c$.

As a consequence, by Lemma $2.5$ we have
\begin{align}
	\limsup _{\varepsilon \rightarrow 0}\ E(\exp(c(T_{4}+\frac{1}{2}\int_{0}^{1}\operatorname{div}_{x}{f}(t,\phi_t,\mathscr{L}_{\phi_t})\d t))|\|B\|<\varepsilon)\leq1,
\end{align}
for every real number $c$.

Recall previous works and then Lemma $2.5$ allows us to conclude that
$$
\lim _{\varepsilon \rightarrow 0} \frac{\mathbb{P}(\|X-\phi\|<\varepsilon)}{\mathbb{P}(\|B\|<\varepsilon)}=\exp \left(-\frac{1}{2} \int_{0}^{1}|\dot{\phi}_t-f(t,\phi_t,\mathscr{L}_{\phi_t})|^{2} \d t-\frac{1}{2} \int_{0}^{1}\operatorname{div}_xf(t,\phi_t,\mathscr{L}_{\phi_t})\right)\d t.
$$
The proof of Theorem $3.1$ is complete.
\end{proof}
\begin{remark}
	From the expression of OM action functional for Mckean-Vlasov SDEs, one can easily note that when the coefficient $f$ does not depend on the distribution of the solution, OM action functional coincides with the classical OM action functional.
\end{remark}
		\section{Euler-Lagrange Equations for OM action functional}
		In this section, we would derive Euler-Lagrange equations for the OM action function (functional). The basic idea is inspired by the proof of It$\mathrm{\hat{o}}$ formula for distribution dependent functions such as \cite{PR}.
		
		Let
		$$\overline{OM}(t,\phi_t,\dot{\phi}_t):=OM(t,\phi_t,\dot{\phi}_t,\delta_{\phi_t}).$$
		Then by the classical Euler-Lagrange equation we have
		$$\frac{\mathrm{d}}{\mathrm{d} t} \frac{\partial \overline{OM}(t,\phi_t,\dot{\phi}_t)}{\partial \dot{\phi}_t}=\frac{\partial \overline{OM}(t,\phi_t,\dot{\phi}_t)}{\partial \phi_t}.$$
	So we have (d=1)
			\begin{align}
				&\frac{\partial \overline{OM}(t,\phi_t,\dot{\phi}_t)}{\partial \dot{\phi}_t}=-(\dot{\phi}_t-\bar{f}(t,\phi_t))=(f(t,\phi_t,\delta_{\phi_t})-\dot{\phi}_t),\\&\frac{\partial \overline{OM}(t,\phi_t,\dot{\phi}_t)}{\partial \phi_t}=(\dot{\phi}_t-\bar{f}(t,\phi_t)\partial_{x}\bar{f}(t,\phi_t)-\frac{1}{2}\partial_{x}^2 \bar{f}(t,\phi_t)=(\dot{\phi}_t-f(t,\phi_t,\delta_{\phi_t})\partial_{x}f(t,\phi_t,\delta_{\phi_t})-\frac{1}{2}\partial_{x}^2f(t,\phi_t,\delta_{\phi_t}),
			\end{align}
			and
			\begin{align}
			\frac{\mathrm{d}}{\mathrm{d} t} \frac{\partial \overline{OM}(t,\phi_t,\dot{\phi}_t)}{\partial \dot{\phi}_t}&=\frac{\mathrm{d}}{\mathrm{d} t} \Big((\bar{f}(t,\phi_t)-\dot{\phi}_t)\Big)=\ddot{\phi}_t-	\partial_{x}\bar{f}(t,\phi_t)\dot{\phi}_t-\partial_{t}\bar{f}(t,\phi_t)\nonumber\\&=	\partial_{x}\bar{f}(t,\phi_t)\dot{\phi}_t-\partial_{t}f(t,\phi_t,\delta_{\phi_t})(\partial_{t}^{\delta_{\phi(t)}}f)(t,\phi_t,\dot{\phi}_t)-\ddot{\phi}_t\nonumber\\&=-\ddot{\phi}_t+	\partial_{x}f(t,\phi_t,\dot{\phi}_t)\dot{\phi}_t+\partial_{t}f(t,\phi_t,\delta_{\phi_t})+(\partial_{t}^{\delta_{\phi(t)}}f)(t,\phi_t,\dot{\phi}_t),
			\end{align}
		where ($\delta_{\phi_t}:=\mu^{\phi}_{t}$)
		\begin{align}
			(\partial_{t}^{\delta_{\phi(t)}}f)(t,\phi_t,\dot{\phi}_t):=\int_{\mathbb{R}}\dot{\phi}_t(\partial_{\mu_t^\phi}f)(t,\phi_t,\mu_t^{\phi})(y)\mu_t^\phi(\d y).
		\end{align}
	Combining $(4.1)-(4.3)$, we obtain Euler-Lagrange equation for OM action functional.
	\begin{align}
		\ddot{\phi}_t=\partial_{t}f(t,\phi_t,\delta_{\phi_t})+\frac{1}{2}\partial_{x}^2f(t,\phi_t,\delta_{\phi_t})+f(t,\phi_t,\delta_{\phi_t})\partial_{x}f(t,\phi_t,\delta_{\phi_t})+(\partial_{t}^{\delta_{\phi(t)}}f)(t,\phi_t,\dot{\phi}_t).
	\end{align}
	We can get a similar result for the multidimensional case.
		\begin{remark}
			In fact, when we are devoted to obtaining the most probable path for $(3.1)$ by Euler-Lagrange equation to OM action function, the uniformly bounded on $f$ is so strong. Since so many physical models are not satisfied such as $f(x,\mu)=\int_{\mathbb{R}}xy\mu(\d y)$. So an alternative approach is that we assume reference path $\phi \in {C}_b^2([0,1],\mathbb{R}^{d})$ and then let  $f \in \mathscr{C}\left([0, \infty) \times \mathbb{R}^{d} \times \mathscr{P}_{2}\left(\mathbb{R}^{d}\right)\right)$.
		\end{remark}	
Now we apply our results to the following example.
\begin{example}
	Consider the following scalar Mckean-Vlasov SDE:
\begin{align}
	\d X_t=E(X_t)[X_t-X_t^3]\d t+\d B_t, X_0=1,
\end{align}
where $f(x,\mu)=\int_{\mathbb{R}}(x-x^3)y\mu(\d y)$and $g(x,\mu)=1$. Let $f=0$ we can get $1,-1,0$ are metastable states of system $(4.6)$. And by Theorem $3.1$ and Remark $4.1$ we can obtain the Onsager-Machlup action functional for $(4.6):$
$$L(\phi_t,\dot{\phi}_t,\delta_{\phi_t})=-\frac{1}{2}\int_{0}^{1}|\dot{\phi}_t-\int_{\mathbb{R}}(\phi_t-\phi_t^3)y\mu_t^\phi(dy)|^2\d t-\frac{1}{2}\int_{\mathbb{R}}(1-3\phi_t^2)y\mu_t^\phi(\d y).$$

Then we can find the most probable path $\phi_t^*$ for $X_t$ by minimizing the corresponding OM action functional $L(\phi_t,\dot{\phi}_t, \delta_{\phi_t})$ with the help of variational principle.

By applying Euler-Lagrange equation $(4.5)$ for $L(\phi_t,\dot{\phi}_t,\delta_{\phi_t})$, we obtain
\begin{align}
\ddot{\phi}^*_t=\int_{\mathbb{R}}(-3\phi^*_t)y\mu_t^{\phi^*}(dy)+\int_{\mathbb{R}}(\phi^*_t-(\phi^*_t)^3)y\mu_t^{\phi^*}(\d y)\int_{\mathbb{R}}(1-3(\phi_t^*)^2)y\mu_t^{\phi^*}(\d y)+\phi_t^*-(\phi_t^*)^3,\nonumber
\end{align}
with boundary conditions $\phi^*_0:=1$ and $\phi^*_1=-1$. So if we can solve this ODEs, we immediately obtain the most probable path of system $(4.6)$ between two different metastable states. This is an important and interesting application of OM action functional.
\end{example}

			\vspace{0.5cm}

		\end{document}